\definecolor{rojo}{RGB}{221,0,0}
\tikzset{
>=stealth',
help lines/.style={dashed, thick},
axis/.style={<->},
important line/.style={thick},
connection/.style={thick, dotted},
}
\newtheorem{theorem}{Theorem}
\newtheorem{lemma}[theorem]{Lemma}
\theoremstyle{definition}
\theoremstyle{remark}
\numberwithin{equation}{section}
\DeclareMathOperator{\supp}{supp}
\newcommand{\abs}[1]{\left\vert#1\right\vert}
\newcommand{\norm}[1]{\left\Vert#1\right\Vert}
\def\zR{\ensuremath{\mathbb{R}}}
\def\pii{\left(}
\def\pdd{\right)}
\def\cii{\left[}
\def\cdd{\right]}
\def\tends{\rightarrow}
\begin{document}
\title []{Continuous time random walks and the Cauchy problem for the heat equation}
%
%


\author[]{Hugo Aimar}
\author[]{Gast\'{o}n Beltritti}
\author[]{Ivana G\'{o}mez}
%
\thanks{The research was supported  by CONICET, ANPCyT (MINCyT) and UNL}
%
%
%

\begin{abstract}
In this paper we deal with anomalous diffusions induced by Continuous Time Random Walks - CTRW in $\mathbb{R}^n$. A particle moves in
$\mathbb{R}^n$ in such a way that the probability density function $u(\cdot,t)$ of finding it in region $\Omega$ of $\mathbb{R}^n$ is
given by $\int_{\Omega}u(x,t) dx$. The dynamics of the diffusion is provided by a space time probability density $J(x,t)$ compactly supported
in $\{t\geq 0\}$. For $t$ large enough, $u$ must satisfy the equation $u(x,t)=[(J-\delta)\ast u](x,t)$ where $\delta$ is the Dirac delta in space time.
We  give a sense to a Cauchy type problem for a given initial density distribution $f$. We use Banach fixed point method to solve it,
and we prove that under parabolic rescaling of $J$ the equation tends weakly to the heat equation and that for particular kernels $J$
the solutions tend to the corresponding temperatures when the scaling parameter approaches to zero.
\end{abstract}

\maketitle

\section{Introduction and statement of the results}

We shall be concerned with a probabilistic description of the motion of a particle in the space $\mathbb{R}^n$.
As usual we shall write $\mathbb{R}^{n+1}_{+}$ to denote the set $\{(x,t): x\in \mathbb{R}^{n} \textrm{ and } t\geq 0\}$.
Sometimes we shall also considerer the whole space time $\mathbb{R}^{n+1}=\{(x,t): x\in \mathbb{R}^{n} \textrm{ and } t\in \mathbb{R}\}$. The
$x$ variable is thought as a space variable, while $t$ represents time.

Let $u(x,t)$ denote, for $t$ fixed, the probability density of the position of the particle at time $t$. Precisely, for a given Borel set $E$ in $\mathbb{R}^{n}$ the quantity
$\mathcal{P}(t,E)=\int_E u(x,t) dx$ measures the probability of finding the particle in $E$ at time $t$.

The general problem is to find $u(x,t)$ when the dynamics of the system is known and some initial state is given.

Regarding the dynamics of the system we shall deal with anomalous diffusions. More precisely with continuous time random walks (CTRW). For a comprehensive introduction to the
subject we refer to \cite{MeKla2000}. A CTRW in  $\mathbb{R}^n$ is provided by a space-time probability density function, the kernel, $J(x,t)$ defined in $\mathbb{R}^{n+1}$. In this model
the particle has a probability density function $u(x,t)$ of arrival at position $x\in \mathbb{R}^n$ at time $t>0$ which depends on the events of arrival at any $y\in \mathbb{R}^n$,
sometimes only on the events of arrival at any $y$ in some neighborhood of $x$, at any previous time $s<t$. Precisely, this dependence is given by the convolution in $\mathbb{R}^{n+1}$
of $J$ with $u$ itself. In other words, for $t\geq 0$ and $x\in\mathbb{R}^{n}$

\begin{equation}\label{eq:asconvolutionJ}
u(x,t)=(J\ast u)(x,t)=\iint_{\mathbb{R}^{n+1}} J(x-y,t-s) u(y,s) dy ds.
\end{equation}

The physical condition of the dependence of the current position of the particle only on the past ($s<t$) gives us the first natural condition on $J$,

\textit{(J1)}\, $\supp J\subset \mathbb{R}^{n+1}_{+}$.

\noindent On the other hand, since $J$ is a density in $\mathbb{R}^{n+1}$, we must have

\textit{(J2)}\,$J\geq 0$, and

\textit{(J3)}\,$J\in L^1(\mathbb{R}^{n+1})$  and $\iint_{\mathbb{R}^{n+1}} J(x,t) dx dt =1$.

\medskip

Following the notation in \cite{MeKla2000}, the density function defined in $\mathbb{R}^{n}$ by
\begin{equation*}
\lambda(x)=\int_{\mathbb{R}} J(x,t) dt
\end{equation*}
is called \textit{the jump length probability function}. Notice that from \textit{(J1)}, $\lambda(x)=\int_{\mathbb{R}^+} J(x,t) dt$. On the other hand, \textit{the waiting time
probability function} is given by
\begin{equation*}
\tau(t)=\int_{\mathbb{R}^{n}} J(x,t) dx.
\end{equation*}

Regarding the initial condition, let us first assume that the particle is localized at the origin of $\mathbb{R}^{n}$ for $t<0$. In other words $u(x,t)=\delta_0(x)$ for $t<0$.
Hence, since $u(x,t)$ for $t\geq 0$ needs to satisfy \eqref{eq:asconvolutionJ}, from \textit{(J1)} we must have that
\begin{align*}
u(x,0) &= \iint_{\mathbb{R}^{n+1}} J(x-y,-s) u(y,s) dy ds\\
&= \int_{\mathbb{R}^{-}}\left(\int_{\mathbb{R}^{n}}J(x-y,-s)u(y,s) dy\right)ds\\
&= \int_{\mathbb{R}^{-}}\left(\int_{\mathbb{R}^{n}}J(x-y,-s)\delta_0(y) dy\right)ds\\
&= \int_{\mathbb{R}^{-}} J(x,-s) ds\\
&= \lambda(x).
\end{align*}
In other words, the deterministic situation, \textit{the particle is at the origin for $t<0$} produces \textit{immediately} at time $t=0$ a random situation modeled precisely
by the jump length probability function $\lambda(x)$ associated to the density $J$.

More generally, if the position at time $t<0$ of the particle distributes as indicates the density $f(x)$, then $u(x,0)=(\lambda\ast f)(x)$. In this framework the basic
initial problem we are interested in, takes the following form. Given $J(x,t)$ and $f(x)$, find $u(x,t)$ for $(x,t)\in\mathbb{R}^{n+1}_{+}$ such that

\begin{equation*}
(P)\left\{\begin{array}{cc} u(x,t)=(J\ast \overline{u})(x,t),\ \
x\in\mathbb{R}^{n}, t\geq 0;\\
\overline{u}(x,t)= \left\{\begin{array}{ccc}
&f(x),\  & t<0;\\
&u(x,t),\  & t\geq 0.
\end{array}
\right.
\end{array}
\right.
\end{equation*}

Sometimes, to emphasize the data $J$ and $f$ in \textit{(P)}, we shall write $P(J,f)$ for the problem $P$ and $u(J,f)$ for its solution.

Let us observe that the expected initial condition is attained since, taking $t=0$ in the first equation in \textit{(P)} we get $u(x,0)=(J\ast \overline{u})(x,0)=\iint J(x-y,-s)f(y)dy ds
=(\lambda\ast f)(x)$.

We shall consider wide families of kernels $J$, but there is one, the parabolic mean value kernels, which plays a more significant role for our
subsequent analysis. We shall use $\mathscr{H}$ (for heat) to denote these special occurrences of $J$. Let us introduce the most known of these kernels $\mathscr{H}$ (see \cite{Watson2012} or \cite{Evans98}). Set
$\mathcal{W}(x,t)$ to denote the Weierstrass kernel for $t>0$ and $x\in\mathbb{R}^{n}$. Precisely $\mathcal{W}(x,t)=(4\pi t)^{-n/2}e^{-\abs{x}^2/(4t)}$. Set
$E=\{(x,t)\in\mathbb{R}^{n+1}_{+}: \mathcal{W}(x,t)\geq 1\}$ and $\mathcal{H}(x,t)=\tfrac{1}{4}\mathcal{X}_E(x,t)\tfrac{\abs{x}^2}{t^2}$.

As it is easy to check $\mathcal{H}$ satisfies properties \textit{(J1)}, \textit{(J2)} and \textit{(J3)} stated above. Moreover, $\mathcal{H}$ satisfies also
the following two properties

\textit{(J4)}\, has compact support in $\mathbb{R}^{n+1}$;

\textit{(J5)}\, it is radial as a function of $x\in \mathbb{R}^{n}$ for each $t$.

\medskip

The outstanding fact regarding $\mathcal{H}$ is given by its role in the mean value formula for temperatures. If $v(x,t)$ is a solution
of the heat equation $\tfrac{\partial v}{\partial t}=\triangle v$ in a domain $\Omega$ in $\mathbb{R}^{n+1}$, then, for $(x,t)\in\Omega$ and $r$ small enough
we have that
$v(x,t)=\iint\mathcal{H}_r(x-y,t-s)v(y,s)dy ds$, where $\mathcal{H}_r$ denotes the parabolic $r$-mollifier of $\mathcal{H}$. Precisely
\begin{equation*}
\mathcal{H}_r(x,t)=\frac{1}{r^{n+2}}\mathcal{H}\left(\frac{x}{r},\frac{t}{r^2}\right)=\frac{1}{r^n}\mathcal{X}_{E(r)}(x,t)\frac{\abs{x}^2}{t^2},
\end{equation*}
with $E(r)=\{(x,t)\in \mathbb{R}^{n+1}_{+}: \mathcal{W}(x,t)\geq r^{-n}\}$. The following figure depicts the support
$E(r)$ of $\mathcal{H}_r$.

\begin{figure}[h!]
\psset{xunit=.7cm, yunit=.7cm}
\begin{pspicture}(5.5,-2.5)(-.6,2.5)
  \psaxes[linecolor=gray, linewidth=.7pt, gridlabels=6pt, ticks=none,labels=none]{<->}(0,0)(5.5,-2.5)(-.6,2.5)
\psplot[linecolor=blue!60,linewidth=1.2pt]%
    {0.00001}{0.3183}{4 x mul 1  0.5 4 3.1415 mul x mul sqrt mul div ln mul sqrt}
  \psplot[linecolor=blue!60,linewidth=1.2pt]%
    {0.00001}{0.3183}{4 x mul 1  0.5 4 3.1415 mul x mul sqrt mul div ln mul sqrt neg}
\psplot[linecolor=blue!60,linewidth=1.2pt]%
    {0.00001}{1.2732}{4 x mul 1  0.25 4 3.1415 mul x mul sqrt mul div ln mul sqrt}
    \psplot[linecolor=blue!60,linewidth=1.2pt]%
    {0.00001}{1.2732}{4 x mul 1  0.25 4 3.1415 mul x mul sqrt mul div ln mul sqrt neg}
\psplot[linecolor=blue!60,linewidth=1.2pt]%
    {0.00001}{5.0929}{4 x mul 1  0.125 4 3.1415 mul x mul sqrt mul div ln mul sqrt}
    \psplot[linecolor=blue!60,linewidth=1.2pt]%
    {0.00001}{5.0929}{4 x mul 1  0.125 4 3.1415 mul x mul sqrt mul div ln mul sqrt neg}
\rput[c](5.4,-0.3){$t$}
\rput[c](-0.3,2.5){$x$}
\end{pspicture}
\caption{Sets $E(r)$ for $n=1$ and $r=\tfrac{1}{2}, \tfrac{1}{4}, \tfrac{1}{8}$.}
\label{fig:heatballs}
\end{figure}
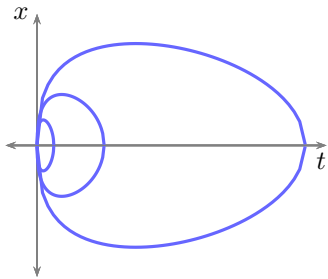

\medskip
In the sequel, for any kernel $J(x,t)$ and $r>0$ we shall write $J_r(x,t)$ to denote the parabolic approximation to the identity given by $J_r(x,t)=\frac{1}{r^{n+2}}J\left(\frac{x}{r},\frac{t}{r^2}\right)$. Moreover, the notation $v_r(x,t)$ or even $f_r(x)$ for functions depending on space time or, only on the space variable will have always the same meaning. Precisely, $v_r(x,t)=r^{-n-2}v(r^{-1}x,r^{-2}t)$ and $f_r(x)=r^{-n-2}f(r^{-1}x)$.

\medskip
The results of this paper are in the spirit of those in \cite{CERW08} and \cite{CoElRo09}. Instead of dealing with generalization of boundary conditions, we are concerned with diffusion problems in the whole space $\mathbb{R}^{n}$ and the initial condition is generalized.

\medskip
Let us state the main results of this paper. The first one is the weak convergence to the heat equation.

\begin{theorem}\label{thm:theorem1limits}
Assume that $J(x,t)$ satisfies (J2), (J3), (J4) and (J5). Then, for each $\varphi$ in the Schwartz class of $\mathbb{R}^{n+1}$,
we have
\begin{equation*}
\lim_{r\to 0}\frac{1}{r^2}\left(J_r-\delta\right)\ast\varphi = \mu\frac{\partial \varphi}{\partial t} + \nu\triangle\varphi,
\end{equation*}
uniformly on $\mathbb{R}^{n+1}$, where $\mu=-\iint t J(x,t) dx dt$ and $\nu=\frac{1}{2n}\iint \abs{x}^2 J(x,t) dx dt$.
\end{theorem}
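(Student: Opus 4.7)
My plan is to reduce everything to a Taylor expansion of $\varphi$ around $(x,t)$, carried out to third order, and then exploit the radial symmetry hypothesis (J5) to kill the term that would otherwise blow up as $r\to 0$. First I would undo the scaling: with the change of variables $y = rz$, $s = r^2\sigma$ in the convolution $(J_r\ast\varphi)(x,t)$, and using the definition of $J_r$, I get
\begin{equation*}
\frac{1}{r^2}\bigl[(J_r-\delta)\ast\varphi\bigr](x,t) = \iint_{\mathbb{R}^{n+1}} J(z,\sigma)\,\frac{\varphi(x-rz,t-r^2\sigma)-\varphi(x,t)}{r^2}\,dz\,d\sigma,
\end{equation*}
so $J$ itself (no rescaling) appears in the integrand, and the compact support hypothesis (J4) confines the integration to a fixed bounded set independent of $r$.

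Next, with $h=rz$, $k=r^2\sigma$, I would expand
\begin{equation*}
\varphi(x-h,t-k)-\varphi(x,t) = -h\!\cdot\!\nabla_x\varphi - k\,\partial_t\varphi + \tfrac{1}{2}(h\!\cdot\!\nabla_x)^2\varphi + k\,h\!\cdot\!\nabla_x\partial_t\varphi + \tfrac{k^2}{2}\partial_t^2\varphi + R,
\end{equation*}
with $R=O\bigl((|h|+|k|)^3\bigr)$ controlled by the $C^3$-norm of $\varphi$ (finite since $\varphi$ is Schwartz). Substituting back and dividing by $r^2$ produces five pieces: a $-r^{-1}z\!\cdot\!\nabla_x\varphi$ piece, a $-\sigma\,\partial_t\varphi$ piece, a $\tfrac{1}{2}(z\!\cdot\!\nabla_x)^2\varphi$ piece, plus two leftover terms carrying positive powers of $r$ that contribute $O(r)$ uniformly (using (J4) to bound $|z|,|\sigma|$ on the support of $J$ and Schwartzness of $\varphi$ to bound the derivatives).

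Now I would integrate each piece against $J(z,\sigma)$. The potentially divergent piece is $-r^{-1}\nabla_x\varphi(x,t)\cdot\iint z\,J(z,\sigma)\,dz\,d\sigma$; by (J5) (radiality in $x$), $\iint z_i\,J\,dz\,d\sigma=0$ for every $i$, so this piece vanishes identically. The $\sigma$-piece yields $-\partial_t\varphi(x,t)\iint\sigma\,J\,dz\,d\sigma = \mu\,\partial_t\varphi(x,t)$ directly from the definition of $\mu$. For the quadratic piece, radiality again gives $\iint z_iz_j\,J\,dz\,d\sigma = \frac{\delta_{ij}}{n}\iint|z|^2J\,dz\,d\sigma$, so $\tfrac{1}{2}\sum_{i,j}\partial_i\partial_j\varphi\iint z_iz_j J = \nu\,\Delta\varphi(x,t)$.

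The main technical point, and the only mild obstacle, is making the error uniform in $(x,t)$. This is handled by combining (J4) (which makes $|z|$ and $|\sigma|$ bounded on the support of the measure $J\,dz\,d\sigma$), the normalization in (J3) (so that integrating against $J$ is bounded by a fixed mass), and the fact that $\varphi$ being Schwartz makes $\nabla_x^{\alpha}\partial_t^\beta\varphi$ uniformly bounded on $\mathbb{R}^{n+1}$ for every multi-index. Assembling the pieces yields the stated uniform limit.
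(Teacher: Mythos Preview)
Your proposal is correct and follows essentially the same route as the paper: Taylor-expand $\varphi$ to second order with cubic remainder, use the radiality hypothesis (J5) to annihilate the odd-in-$z$ terms (in particular the dangerous $r^{-1}$ piece and the off-diagonal Hessian entries), identify the two surviving integrals with $\mu\,\partial_t\varphi$ and $\nu\,\Delta\varphi$, and bound the leftover terms by $O(r)$ via (J4) and Schwartzness. The only cosmetic difference is that you perform the parabolic change of variables before expanding, whereas the paper expands first and rescales afterwards; the computations are otherwise line-for-line the same.
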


The second result concerns the existence of solutions for problem $(P)$. For a given Lipschitz function of order $\gamma$, $f\in\mathcal{C}^{0,\gamma}(\mathbb{R}^n)$, we denote by $[f]_{\gamma}$ the corresponding seminorm of $f$. In the next statement $\mathcal{C}$ denotes the space of continuous functions.

\begin{theorem}\label{thm:theorem2existence}
Assume that $J(x,t)$ satisfies (J1), (J2), (J3) and (J4). Set $\alpha=\sup \{\beta:\iint_{s\leq \beta}J(y,s)dy ds<1\}$. Let $f\in L^{\infty}(\mathbb{R}^n)$ be given. Then there exists one and only one solution
$u(x,t)$ of $(P)$ in the space $(\mathcal{C}\cap L^{\infty})(\mathbb{R}^{n+1}_{+})$. If $f\in (L^1\cap L^{\infty})(\mathbb{R}^n)$, then $\int_{\mathbb{R}^n}u(x,t) dx=\int_{\mathbb{R}^n}f(x) dx$ for every $t\geq 0$. In particular, if $f$ is a density function, so is $u(\cdot,t)$ for every $t\geq 0$. Moreover, if $f$ belongs to $(\mathcal{C}^{0,\gamma}\cap L^{\infty})(\mathbb{R}^n)$ we have that
\begin{equation}\label{eq:temperaturelipschitz}
\abs{u(x,t)-f(x)}\leq C [f]_{\gamma}
\end{equation}
for $(x,t)\in \mathbb{R}^n\times [0,\alpha]$ and some $C$ which does not depend on $f$.
\end{theorem}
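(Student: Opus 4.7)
I would fix any $\beta<\alpha$, so that $k_\beta:=\iint_{s\leq\beta}J(y,s)\,dy\,ds<1$. On $X_\beta:=(\mathcal{C}\cap L^\infty)(\mathbb{R}^n\times[0,\beta])$ with sup norm, extend $u$ by $\overline u(\cdot,s)=f$ for $s<0$ and $\overline u=u$ on $[0,\beta]$, and set $Tu:=J\ast\overline u$. Translation continuity of $J\in L^1$ makes $Tu$ uniformly continuous; $\|J\|_1=1$ gives $\|Tu\|_\infty\leq\max(\|f\|_\infty,\|u\|_\infty)$; and since $\overline u_1-\overline u_2=0$ on $\{s<0\}$ while (J1) kills $\{s>t\}$,
\[
|Tu_1-Tu_2|(x,t)\leq\|u_1-u_2\|_\infty\iint_{0\leq\rho\leq t}J(z,\rho)\,dz\,d\rho\leq k_\beta\|u_1-u_2\|_\infty,
\]
a strict contraction. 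Banach's theorem gives the unique solution on $[0,\beta]$; treating $u(\cdot,\beta)$ as the new ``past'' and iterating, the same contraction constant extends $u$ uniquely to all of $\mathbb{R}^{n+1}_{+}$. The Picard iteration from $u_0\equiv 0$ preserves $\|u_n\|_\infty\leq\|f\|_\infty$ (because $\|Tu_n\|_\infty\leq\max(\|f\|_\infty,\|u_n\|_\infty)$ and the bound is preserved), whence $\|u\|_\infty\leq\|f\|_\infty$.

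\textbf{Mass conservation.} For $f\in L^1\cap L^\infty$, I would rerun the fixed point in $\mathcal{C}([0,\beta];L^1)\cap L^\infty$: the bound $\|Tu(\cdot,t)\|_1\leq\int\tau(t-s)\|\overline u(\cdot,s)\|_1\,ds$ combined with iteration from $u_0\equiv 0$ yields $\|u(\cdot,t)\|_1\leq\|f\|_1$, which legalizes Fubini applied to $u=J\ast\overline u$ after integration in $x$. The result is the scalar Volterra equation
\[
g(t)=\int\tau(t-s)\,\overline g(s)\,ds,\qquad g(t):=\int u(x,t)\,dx,
\]
with $\overline g=\int f$ on $\{s<0\}$. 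The constant $g\equiv\int f\,dx$ solves it because $\int\tau=1$, and the one-variable analogue of the above contraction forces uniqueness, hence $\int u(\cdot,t)=\int f$ for every $t\geq 0$.

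\textbf{Lipschitz estimate.} Put $w=u-f$. Using $\iint J=1$ to subtract $f(x)$ from both sides of $u=J\ast\overline u$ yields
\[
w=I_1+Kw,\quad I_1(x,t)=\iint J(x-y,t-s)(f(y)-f(x))\,dy\,ds,\quad Kw(x,t)=\int_0^t\!\!\int J(x-y,t-s)w(y,s)\,dy\,ds.
\]
By (J4), $\supp J\subset\{|z|\leq R\}\times\mathbb{R}$ for some $R$, so $|I_1|\leq[f]_\gamma\int|z|^\gamma\lambda(z)\,dz\leq R^\gamma[f]_\gamma=:C_0[f]_\gamma$ uniformly in $(x,t)$. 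Iterating produces the Neumann series $w=\sum_{n\geq 0}K^nI_1$, and after integrating out the spatial variables and substituting $r_i=s_{i-1}-s_i$,
\[
|K^nI_1(x,t)|\leq C_0[f]_\gamma\int_{r_i\geq 0,\,r_1+\cdots+r_n\leq t}\tau(r_1)\cdots\tau(r_n)\,dr_1\cdots dr_n=C_0[f]_\gamma\int_0^t\tau^{\ast n}(r)\,dr.
\]
Summing in $n$ gives $1+M(t)$, the renewal function of the waiting-time density $\tau$. Since $\tau\in L^1$ with $\int\tau=1$ must place positive mass on $(\varepsilon,\infty)$ for some $\varepsilon>0$, a Chernoff-type bound on the binomial number of ``large'' increments yields $M(\alpha)<\infty$, giving $|u(x,t)-f(x)|\leq(1+M(\alpha))C_0[f]_\gamma$ on $\mathbb{R}^n\times[0,\alpha]$. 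The main obstacle is exactly this endpoint step: in general $k_\alpha=1$, so the crude contraction bound $\|w\|_\infty\leq C_0[f]_\gamma/(1-k_t)$ blows up as $t\uparrow\alpha$, and it is the Volterra time-ordering $0\leq s_n\leq\cdots\leq s_1\leq t$ that rescues the Neumann sum at the closed endpoint $\alpha$.
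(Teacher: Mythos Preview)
Your existence argument is essentially the paper's: Banach fixed point on a thin time strip, then march forward. One wording slip: ``treating $u(\cdot,\beta)$ as the new past'' is not quite right---the convolution $J\ast\overline u$ can reach back past $t=0$, so on each new strip you must keep the \emph{entire} history $\overline u$ on $(-\infty,\beta]$ (i.e.\ $f$ on $\{t<0\}$ together with the already-built $u$ on $[0,\beta]$), not just the single slice $u(\cdot,\beta)$. The paper does this explicitly by defining $\overline{u_i}$ recursively; with that fix your argument is the same as theirs.

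For mass conservation you take a cleaner route than the paper. The authors track $\int T_1^m f(x,t)\,dx$ through the Picard iterates and show convergence also in $\mathcal C([0,\alpha/2];L^1)$; you instead integrate the fixed-point equation in $x$ to obtain a scalar Volterra equation for $g(t)=\int u(x,t)\,dx$, observe that the constant $\int f$ solves it, and invoke one-dimensional uniqueness. Both work; yours is shorter once the $L^1$ bound on $u(\cdot,t)$ is in hand.

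The Lipschitz estimate is where you genuinely diverge. The paper never faces the endpoint $t=\alpha$ head-on: it first bounds $\|u-f\|_\infty$ on $[0,\alpha/2]$ by the geometric-series estimate $\tfrac{1}{1-\tau}\|T_1f-f\|_\infty$ with $\tau=\iint_{s\le\alpha/2}J<1$, and then bootstraps to $[\alpha/2,\alpha]$ by rerunning the same contraction with the already-controlled $u_1$ inserted as data. Your Neumann-series/renewal-function argument is a single pass on $[0,\alpha]$: the time-ordering in $K^n$ gives $|K^nI_1|\le C_0[f]_\gamma\,P(S_n\le t)$ and summability of $M(\alpha)=\sum_{n\ge1}P(S_n\le\alpha)$ replaces the two-strip trick. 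This is correct (the Chernoff bound, or simply (J4) giving $\tau$ compact support and positive mean, makes $M(\alpha)<\infty$), and it yields the estimate with the constant $C=(1+M(\alpha))\int|z|^\gamma\lambda(z)\,dz$. The paper's approach is more elementary---no renewal theory---while yours explains transparently why the closed endpoint $\alpha$ causes no blow-up.
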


The next result which is interesting by itself contains a maximum principle which shall be used in the proof of Theorem~\ref{thm:theorem4convergence}. Precisely, the supremum of the probability density function in the future of $\alpha=\sup \{\beta:\iint_{s\leq \beta}J(y,s)dy ds<1\}$ coincides with its supremum in $\mathbb{R}^n\times [0,\alpha]$.

\begin{theorem}\label{thm:maximumprinciple}
Let $J$ be a kernel satisfying \textit{(J1)}, \textit{(J2)}, \textit{(J3)}, and \textit{(J4)}. Let $w(x,t)$ be a bounded function defined in $\mathbb{R}^{n+1}_{+}$ such that
\begin{equation}\label{eq:meanvaluewithJ}
w(x,t)=\iint J(x-y,t-s) w(y,s) dy ds
\end{equation}
for $(x,t)\in \mathbb{R}^n\times [\alpha,+\infty)$. Then,
\begin{equation*}
\sup_{(x,t)\in \mathbb{R}^{n+1}_{+}}\abs{w(x,t)} = \sup_{(x,t)\in \mathbb{R}^n\times [0,\alpha]}\abs{w(x,t)}.
\end{equation*}
\end{theorem}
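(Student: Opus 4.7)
The plan is to control the running supremum $\Phi(T):=\sup_{\mathbb{R}^n\times[0,T]}|w|$, which is non-decreasing in $T$ and satisfies $\Phi(\alpha)=M_0:=\sup_{\mathbb{R}^n\times[0,\alpha]}|w|$. Set
\[
T^* := \sup\{T\geq\alpha : \Phi(T) = M_0\}.
\]
The conclusion amounts to $T^*=+\infty$, so I will assume $T^*<+\infty$ and derive a contradiction in two steps.

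First I would verify that $\Phi(T^*)=M_0$, i.e.\ that $\Phi$ does not jump at $T^*$. The key input here is that, by \textit{(J3)}, the time marginal of $J$ is absolutely continuous and so puts no mass at the singleton $\{s=0\}$. Applying \eqref{eq:meanvaluewithJ} at any $(x,T^*)$ (allowed since $T^*\geq\alpha$) and changing variables gives
\[
|w(x,T^*)|\leq\iint J(y,s)\,|w(x-y,T^*-s)|\,dy\,ds,
\]
and for $J$-almost every $(y,s)$ one has $s>0$, hence $T^*-s<T^*$, and $|w(x-y,T^*-s)|\leq M_0$ by the very definition of $T^*$. Therefore $|w(x,T^*)|\leq M_0$ for every $x$, and combined with the monotonicity of $\Phi$ this gives $\Phi(T^*)=M_0$.

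Next, I would fix $\epsilon\in(0,\alpha)$ and, for any $(x,t)$ with $t\in(T^*,T^*+\epsilon]$, split the same integral according to whether $s\geq t-T^*$ (in which case $t-s\leq T^*$ and $|w(x-y,t-s)|\leq M_0$) or $s<t-T^*$ (in which case $t-s\in(T^*,T^*+\epsilon]$ and $|w(x-y,t-s)|\leq\Phi(T^*+\epsilon)$). Writing
\[
p_\epsilon := \iint_{s<\epsilon} J(y,s)\,dy\,ds
\]
and using that $t-T^*\leq\epsilon$ together with $\Phi(T^*+\epsilon)\geq M_0$, this leads to $|w(x,t)|\leq M_0(1-p_\epsilon)+\Phi(T^*+\epsilon)\,p_\epsilon$. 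Since the slab $\mathbb{R}^n\times[0,T^*]$ contributes only $M_0$ to the supremum, I obtain the self-improving inequality
\[
\Phi(T^*+\epsilon)\leq M_0(1-p_\epsilon)+\Phi(T^*+\epsilon)\,p_\epsilon.
\]
The decisive point is that $p_\epsilon<1$ whenever $\epsilon<\alpha$, which is exactly how $\alpha$ was defined. Dividing by $1-p_\epsilon>0$ forces $\Phi(T^*+\epsilon)\leq M_0$, so $T^*+\epsilon$ also belongs to the set whose supremum is $T^*$, a contradiction.

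The main obstacle I anticipate is the boundary step $\Phi(T^*)=M_0$, where one must rule out a jump of the running supremum at $T^*$; it is the absence of an atom of the time marginal of $J$ at $s=0$ (a free consequence of the $L^1$ hypothesis \textit{(J3)}) that prevents this. The rest of the argument is a one-step contraction driven entirely by the strict inequality $p_\epsilon<1$ for $\epsilon<\alpha$.
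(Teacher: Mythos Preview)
Your argument is correct and rests on exactly the same mechanism as the paper's: split the convolution at a time threshold separating the ``already controlled'' past from the ``new'' slab, and use that $\iint_{s<\epsilon}J<1$ for $\epsilon<\alpha$ to absorb the new contribution. The packaging, however, is different. The paper runs a discrete induction: with $t_k=\alpha+(k-1)\tfrac{\alpha}{2}$ and $S_k=\sup_{\mathbb{R}^n\times[0,t_k]}|w|$, it shows $S_k=S_{k-1}$ for every $k$ by bounding the ``old'' portion of the integral from below by $1-I(\alpha/2)>0$. Your version is a connectedness argument on the set $\{T\geq\alpha:\Phi(T)=M_0\}$: Step~1 shows this set is closed at its supremum (using only that the time marginal of $J$ has no atom at $0$), and Step~2 shows it is open to the right, forcing $T^*=+\infty$. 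Your approach avoids the explicit step size $\alpha/2$ and the induction bookkeeping, at the modest cost of the extra no-jump step; the paper's fixed step size makes that step unnecessary. Both are equally elementary and yield the same conclusion.
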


Let us proceed to state the fourth result of the paper.
\begin{theorem}\label{thm:theorem4convergence}
For each $H\in\mathscr{H}$ there exists $C>0$ such that for every $r>0$ and every $f\in (\mathcal{C}^{0,\gamma}\cap L^{\infty})(\mathbb{R}^n)$
\begin{equation*}
\norm{u(H_r,f)-u}_{L^{\infty}(\mathbb{R}^{n+1}_{+})}\leq C [f]_{\gamma} r^{\gamma},
\end{equation*}
where $u$ is the temperature in $\mathbb{R}^{n+1}_{+}$ given by $u(x,t)=(\mathcal{W}(\cdot,t)\ast f)(x)$.
\end{theorem}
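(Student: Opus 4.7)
The plan is to view $w:=u(H_r,f)-u$, where $u(x,t)=(\mathcal{W}(\cdot,t)\ast f)(x)$ is the heat extension of $f$, as a bounded solution of the homogeneous $H_r$-equation in the region above the slab $\{0\le t\le r^2\alpha(H)\}$. The parabolic mean value property for temperatures (the defining feature of $\mathscr{H}$) supplies this observation, and Theorem~\ref{thm:maximumprinciple} then reduces the global sup-estimate to a sup-estimate on that short-time slab, which I control by parabolic rescaling together with the modulus of continuity of the Weierstrass semigroup.

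\textbf{Reduction to the slab.} Since $H_r$ is a parabolic mean value kernel, $u=H_r\ast u$ at every $(x,t)$ for which the time support $[t-r^2\alpha(H),t]$ of $H_r(x-\cdot,t-\cdot)$ lies in $\{s\ge 0\}$, i.e.\ for $t\ge r^2\alpha(H)$. For the same range of $t$, the part of the convolution defining $u(H_r,f)$ that comes from $\{s<0\}$ is zero, so also $u(H_r,f)=H_r\ast u(H_r,f)$. Subtracting, $w$ satisfies \eqref{eq:meanvaluewithJ} with $J=H_r$ on $\mathbb{R}^n\times[r^2\alpha(H),\infty)$, and $w$ is bounded because $\|u(H_r,f)\|_\infty,\|u\|_\infty\le\|f\|_\infty$. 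Applying Theorem~\ref{thm:maximumprinciple} with $J=H_r$ yields
\begin{equation*}
\|w\|_{L^\infty(\mathbb{R}^{n+1}_+)} = \sup_{\mathbb{R}^n\times[0,r^2\alpha(H)]}|w|.
\end{equation*}

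\textbf{Estimate on the slab.} On the slab I split $|w|\le|u(H_r,f)-f|+|u-f|$. For the heat part, the standard modulus-of-continuity estimate for the Weierstrass convolution gives
\begin{equation*}
|u(x,t)-f(x)|\le[f]_\gamma\!\int_{\mathbb{R}^n}\!\mathcal{W}(z,t)|z|^\gamma\,dz = c_{n,\gamma}[f]_\gamma t^{\gamma/2} \le c_{n,\gamma}\alpha(H)^{\gamma/2}[f]_\gamma r^\gamma
\end{equation*}
for $t\le r^2\alpha(H)$. For the CTRW part, Theorem~\ref{thm:theorem2existence} applied directly to $H_r$ gives $|u(H_r,f)-f|\le C_{H_r}[f]_\gamma$ but without the crucial factor $r^\gamma$; I extract it by parabolic rescaling. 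Setting $\hat f(x):=f(rx)$, one has $[\hat f]_\gamma=r^\gamma[f]_\gamma$, and a change of variables $(y,s)=(x/r,t/r^2)$ in the defining convolution (using $H_r(z,\sigma)=r^{-n-2}H(z/r,\sigma/r^2)$) shows $u(H_r,f)(x,t)=u(H,\hat f)(x/r,t/r^2)$. Theorem~\ref{thm:theorem2existence} applied to the fixed kernel $H$ with data $\hat f$ gives $|u(H,\hat f)(y,s)-\hat f(y)|\le C_H[\hat f]_\gamma=C_Hr^\gamma[f]_\gamma$ for $s\in[0,\alpha(H)]$, and the identity above transfers this estimate verbatim to $|u(H_r,f)(x,t)-f(x)|$ for $t\in[0,r^2\alpha(H)]$. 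Combining both contributions yields $\|w\|_\infty\le C[f]_\gamma r^\gamma$ with $C=C_H+c_{n,\gamma}\alpha(H)^{\gamma/2}$.

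\textbf{Main difficulty.} The nontrivial step is extracting the $r^\gamma$ factor from Theorem~\ref{thm:theorem2existence}, which the scaling identity above accomplishes at the cost of checking the covariance of problem $(P)$ under parabolic dilation $(x,t)\mapsto(x/r,t/r^2)$ together with the data rescaling $f\mapsto\hat f$. A secondary subtlety is that the mean value identity $u=H_r\ast u$ at $t=r^2\alpha(H)$ involves a heat ball that touches the boundary $\{t=0\}$, where $u$ is only continuous rather than a temperature; this is handled by continuity from the strict interior, where the classical parabolic mean value formula applies.
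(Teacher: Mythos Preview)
Your proof is correct and follows essentially the same route as the paper: reduce to the slab $[0,r^2\alpha(H)]$ via the maximum principle (Theorem~\ref{thm:maximumprinciple}) applied to $w=u(H_r,f)-u$, using the mean value property of $H\in\mathscr{H}$ for the temperature $u$ and the equation for $u(H_r,f)$; then control the slab by the triangle inequality through $f$, invoking the Weierstrass modulus-of-continuity estimate and the parabolic rescaling $u(H_r,f)(x,t)=u(H,f(r\cdot))(x/r,t/r^2)$ combined with \eqref{eq:temperaturelipschitz}. These are exactly the paper's Lemmas~\ref{lem:infinitynormsolutionPrandf} and~\ref{lem:convergetemperatureLipschitz}.
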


\medskip
Let us finally remark that in \cite{CaSi14} the authors prove the H\"{o}lder regularity for solutions of the master equation associated to CTRW's.

\medskip
In Section~\ref{sec:nonlocalweaklimit} we prove the weak convergence of parabolic rescalings to a heat equation. In Section~\ref{sec:meanvaluelaplacian}
we show existence of solution for the Cauchy nonlocal problem. Section~\ref{sec:maximumprinciple} is devoted to prove the maximum principle contained in 
Theorem~\ref{thm:maximumprinciple}. Finally, Section~\ref{sec:proofconvergence}, deals with convergence of solutions of rescalings of \textit{(P)} to temperatures.

\section{Some space time nonlocal parabolic operators and their weak limit. Proof of Theorem~\ref{thm:theorem1limits}}\label{sec:nonlocalweaklimit}

For $0<r<1$, since $\iint J(y,s)dyds=1$, applying Taylor's
formula we get
\begin{align*}
\iint & J_{r}(x-y,t-s)\varphi(y,s)dyds-\varphi(x,t)\\
&= \iint J_{r}(x-y,t-s)(\varphi(y,s)-\varphi(x,t))dyds\\
&=\iint J_{r}(x-y,t-s)\left[\sum_{i=1}^{n}\frac{\partial\varphi}{\partial
x_{i}}(x,t)(y_i-x_i)+\frac{\partial\varphi}{\partial t}(x,t)(s-t) \right.\\
& \phantom{\iint J_{r}}\left.+\frac{1}{2}(y-x,s-t)D^2\varphi(x,t)(y-x,s-t)^{t}+R(y-x,s-t)\right] dy ds,
\end{align*}
where $D^2$ denotes the Hessian matrix of the second derivatives of
$\varphi$ with respect to $x$ and $t$ and
$\abs{R(x,t)}=O(|x|^{2}+t^{2})^{\frac{3}{2}}$.

The last integral in the above identities can be written as the
sums of the following seven terms,

\begin{align*}
I &=\sum_{i=1}^{n}\frac{\partial\varphi}{\partial x_{i}}(x,t)\pii\iint  (y_i-x_{i}) J_{r}(x-y,t-s) dy ds\pdd,\\
II&=\frac{\partial\varphi}{\partial t}(x,t)\pii\iint (s-t) J_{r}(x-y,t-s) dy ds\pdd,\\
III&=\sum_{ij=1, i\neq j}^{n}\frac{\partial^{2}\varphi}{\partial
x_{i}\partial x_{j}}(x,t)\pii\frac{1}{2}\iint  (y_i-x_{i})(y_j-x_{j})J_{r}(x-y,t-s)dyds\pdd,\\
IV&=\sum_{i=1}^{n}\frac{\partial^{2}\varphi}{\partial x_{i}^{2}}(x,t)\pii\frac{1}{2}\iint  (y_i-x_{i})^{2}J_{r}(x-y,t-s)dyds\pdd,\\
V&=\sum_{i=1}^{n} \frac{\partial^{2}\varphi}{\partial
x_{i}\partial t}(x,t)\pii\frac{1}{2}\iint  (y_i-x_{i})(s-t) J_{r}(x-y,t-s)dyds\pdd,\\
VI&=\frac{\partial^{2}\varphi}{\partial t^{2}}(x,t)\pii\frac{1}{2}\iint  (s-t)^{2}J_{r}(x-y,t-s)dyds\pdd,\\
\intertext{and}
VII&=\iint  J_{r}(x-y,t-s) R(y-x,s-t)dy ds.
\end{align*}
Since for $t$ fixed $J$ is radial as a function of $x$, then $I$,
$III$ and $V$ vanish. For the other four integrals we perform the
parabolic change of variables $(z,\zeta)=(\tfrac{x-y}{r},\tfrac{t-s}{r^2})$ to obtain
\begin{align*}
II&=\frac{\partial\varphi}{\partial t}(x,t)r^{2}\pii-\iint  \zeta J(z,\zeta) dz d\zeta\pdd\\
IV&=\sum_{i=1}^{n}\frac{\partial^{2}\varphi}{\partial x_{i}^2}(x,t)r^{2}\pii\frac{1}{2}\iint  z_{i}^{2}J(z,\zeta)dzd\zeta\pdd \\
VI&=\frac{\partial^{2}\varphi}{\partial t^{2}}(x,t)r^{4}\pii\frac{1}{2}\iint  \zeta^{2}J(z,\zeta)dzd\zeta\pdd\\
VII&=\iint  J(z,\zeta) R(rz,r^{2}\zeta)dzd\zeta.
\end{align*}
Finally, since, as a function of $r$ close to zero, $VI$ and $VII$ are of
order at least $r^{3}$, we see that
\begin{equation*}
\lim\limits_{r\tends 0}\frac{1}{r^{2}}\cii
(J_{r}-\delta)\ast\varphi\cdd (x,t)=\lim\limits_{r\tends
0}\pii\frac{II}{r^{2}}+\frac{IV}{r^{2}} \pdd=\mu
\frac{\partial\varphi}{\partial t}(x,t)+\nu\triangle \varphi(x,t),
\end{equation*}
where $\mu$ and $\nu$ are defined as in the statement of Theorem~\ref{thm:theorem1limits}. That thus convergence is uniform in $\zR^{n+1}$ follows from the fact that $\varphi$ is a Schwartz function and so $VI$ and $VII$ converge to zero uniformly.

\begin{lemma}
For $J=\mathcal{H}$ we have that $\mu=-\nu$ and the limit equation in Theorem~\ref{thm:theorem1limits} is the heat equation multiplied by a constant.
\end{lemma}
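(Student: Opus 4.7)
The plan is to test the Taylor expansion that underlies the proof of Theorem~\ref{thm:theorem1limits} against a specific quadratic caloric polynomial, $v(x,t)=\abs{x}^2+2nt$. Since $v_t=2n=\triangle v$, the function $v$ is a global temperature on $\zR^{n+1}$, and the mean value property for $\mathcal{H}$ will force the left hand side of the expansion to vanish, directly producing the identity $\mu+\nu=0$.

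To carry this out I would first observe that every partial derivative of $v$ of order three or higher is zero, so the Taylor remainder $R$ in the proof of Theorem~\ref{thm:theorem1limits} is identically zero and the expansion is exact. Applied with $\varphi=v$, the radial symmetry property \textit{(J5)} kills the terms $I$, $III$, $V$; the term $VI$ vanishes because $v_{tt}\equiv 0$; and $VII$ vanishes because $R\equiv 0$. I would then evaluate $II$ and $IV$ using $v_t=2n$ and $v_{x_ix_i}=2$ and, after applying the parabolic rescaling and the definitions of $\mu$ and $\nu$, expect to obtain, for every $r>0$ and every $(x,t)\in\zR^{n+1}$,
\begin{equation*}
\cii(\mathcal{H}_r-\delta)\ast v\cdd(x,t)=2nr^{2}(\mu+\nu).
\end{equation*}
The next step is to invoke the mean value property for $\mathcal{H}$: because $v$ is a global temperature on $\zR^{n+1}$ and $\mathcal{H}_r$ is compactly supported, the identity $(\mathcal{H}_r\ast v)(x,t)=v(x,t)$ holds pointwise on $\zR^{n+1}$. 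The two identities together force $\mu+\nu=0$, i.e.\ $\mu=-\nu$. Substituting $\nu=-\mu$ into the conclusion of Theorem~\ref{thm:theorem1limits} turns the limit operator into $\mu(\partial_t\varphi-\triangle\varphi)$, which is the heat equation multiplied by the constant $\mu$.

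The only point that will need care is the applicability of the mean value identity to the \emph{polynomial} (hence unbounded) temperature $v$. This should be routine: $\mathcal{H}_r$ is supported in the compact heat ball $E(r)$, so $(\mathcal{H}_r\ast v)(x,t)$ is an integral over a compact set, and the classical local mean value theorem for temperatures applies at every point of $\zR^{n+1}$. Equivalently, both sides of $\mathcal{H}_r\ast v=v$ are polynomials in $(x,t)$ which agree on any open set where the local mean value formula is valid, hence everywhere.
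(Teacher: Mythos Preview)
Your proof is correct and takes a genuinely different route from the paper's own argument. The paper proves the identity
\[
\iint \mathcal{H}(y,s)\,s\,dy\,ds \;=\; \frac{1}{2n}\iint \mathcal{H}(y,s)\,\abs{y}^{2}\,dy\,ds
\]
by brute-force evaluation of each side: it parametrizes the heat ball $E(1)$, passes to polar coordinates in $y$, performs several changes of variable in $s$, and reduces both integrals to values of the Euler gamma function; the final match then comes from the functional equation $\Gamma(z+1)=z\Gamma(z)$. Your approach instead tests the exact second-order Taylor identity underlying Theorem~\ref{thm:theorem1limits} against the caloric polynomial $v(x,t)=\abs{x}^{2}+2nt$, so that the remainder, the mixed terms, and $VI$ all vanish and one is left with $(\mathcal{H}_r-\delta)\ast v = 2nr^{2}(\mu+\nu)$; the mean value property of $\mathcal{H}$ then forces the left side to be zero.

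What each approach buys: the paper's computation yields the explicit numerical values of $\mu$ and $\nu$ for the concrete kernel $\mathcal{H}$, which may be of independent interest. Your argument is shorter, conceptual, and---more importantly---applies verbatim to \emph{any} kernel $H\in\mathscr{H}$ satisfying the mean value formula, not just the specific $\mathcal{H}$; in that sense it explains \emph{why} $\mu=-\nu$ rather than merely verifying it. Your caveat about applying the mean value identity to the unbounded polynomial $v$ is handled exactly as you say: compact support of $\mathcal{H}_r$ makes the convolution a local integral, and the classical local mean value formula for temperatures applies.
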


\begin{proof}
All we need to show is that
\begin{equation}\label{igualdadconstantes}
\iint  \mathcal{H}(y,s)s\ dy ds=\frac{1}{2n}\iint
\mathcal{H}(y,s)|y|^{2}\ dy ds.
\end{equation}
Let us compute both of them in terms of the Euler gamma function and the area surface of the unit ball of $\mathbb{R}^{n}$, $S^{n-1}$.
On one hand we have that
\begin{align*}
\iint  \mathcal{H}(y,s)s\ dy ds
&=\frac{1}{4}\iint \mathcal{X}_{E(1)}(-y,-s)\frac{|y|^{2}}{s^{2}}s\ dyds\\
&=-\frac{1}{4}\iint _{E(1)}\frac{|y|^{2}}{s} dy
ds\\
&=\frac{1}{4}\int_{-\frac{1}{4\pi}}^{0}\int_{B\pii 0, \pii 2ns
\ln(4\pi (-s))\pdd^{\frac{1}{2}}\pdd}\frac{|y|^{2}}{-s} dy ds\\
&=\frac{1}{4}\int_{-\frac{1}{4\pi}}^{0}\frac{1}{-s}\int_{0}^{ \pii
2ns \ln(4\pi
(-s))\pdd^{\frac{1}{2}}}\rho^{n+1}\int_{S^{n-1}}d\sigma d\rho
ds\\
&=\frac{\sigma(S^{n-1})}{4(n+2)}\int_{-\frac{1}{4\pi}}^{0}\frac{1}{-s}\pii
2ns\ln(4\pi (-s))\pdd^{\frac{n+2}{2}}ds\\
&=\frac{\sigma(S^{n-1})}{4(n+2)}\int_{0}^{1}\frac{1}{t}\pii\frac{n}{2\pi}t(-\ln(t))\pdd^{\frac{n+2}{2}}dt\\
&=\frac{\sigma(S^{n-1})n^{\frac{n+2}{2}}}{4(n+2)2^{\frac{n+2}{2}}\pi^{\frac{n+2}{2}}}\int_{0}^{\infty}e^{-\theta\pii\frac{n+2}{2}\pdd}\theta^{\frac{n+2}{2}}d\theta\\
&=\frac{\sigma(S^{n-1})n^{\frac{n+2}{2}}}{4(n+2)2^{\frac{n+2}{2}}\pi^{\frac{n+2}{2}}}
\frac{2}{(n+2)}\frac{2^{\frac{n+2}{2}}}{(n+2)^{{\frac{n+2}{2}}}}\int_{0}^{\infty}e^{-\zeta}\zeta^{\frac{n+2}{2}}d\zeta\\
&=\frac{\sigma(S^{n-1})n^{\frac{n+2}{2}}}{2(n+2)^{\frac{n+6}{2}}\pi^{\frac{n+2}{2}}}\Gamma\pii\frac{n+4}{2}\pdd.
\end{align*}
On the other,
\begin{align*}
\frac{1}{2n}\iint  \mathcal{H}(y,s)|y|^{2}\ dy ds
&=\frac{1}{8n}\iint\mathcal{X}_{E(1)}(-y,-s)\frac{|y|^{2}}{s^{2}}|y|^{2}\
dyds\\
&=\frac{1}{8n}\iint _{E(1)}\frac{|y|^{4}}{s^{2}} dy ds\\
&=\frac{1}{8n}\int_{-\frac{1}{4\pi}}^{0}\int_{B\pii 0, \pii 2ns
\ln(4\pi (-s))\pdd^{\frac{1}{2}}\pdd}\frac{|y|^{4}}{s^{2}} dy ds\\
&=\frac{1}{8n}\int_{-\frac{1}{4\pi}}^{0}\frac{1}{s^{2}}\int_{0}^{
\pii 2ns \ln(4\pi
(-s))\pdd^{\frac{1}{2}}}\rho^{n+3}\int_{S^{n-1}}d\sigma d\rho
ds\\
&=\frac{\sigma(S^{n-1})}{8n(n+4)}\int_{-\frac{1}{4\pi}}^{0}\frac{1}{s^{2}}\pii
2ns\ln(4\pi (-s))\pdd^{\frac{n+4}{2}}ds\\
&=\frac{\sigma(S^{n-1})}{8(n+4)}\frac{n^{\frac{n+4}{2}}4\pi}{2^{\frac{n+4}{2}}\pi^{\frac{n+4}{2}}}\int_{0}^{1}\frac{1}{t^{2}}\pii t(-\ln(t))\pdd^{\frac{n+2}{2}}dt\\
&=\frac{\sigma(S^{n-1})n^{\frac{n+2}{2}}4\pi}{8(n+4)2^{\frac{n+4}{2}}\pi^{\frac{n+4}{2}}}\int_{0}^{\infty}e^{-\theta\pii\frac{n+2}{2}\pdd}\theta^{\frac{n+4}{2}}d\theta\\
&=\frac{\sigma(S^{n-1})n^{\frac{n+2}{2}}4\pi}{8(n+4)2^{\frac{n+4}{2}}\pi^{\frac{n+4}{2}}}\frac{2}{(n+2)}
\frac{2^{\frac{n+4}{2}}}{(n+2)^{\frac{n+4}{2}}}\int_{0}^{\infty}e^{-\zeta}\zeta^{\frac{n+4}{2}}d\zeta\\
&=\frac{\sigma(S^{n-1})n^{\frac{n+2}{2}}}{(n+4)(n+2)^{\frac{n+6}{2}}\pi^{\frac{n+2}{2}}}\Gamma\pii\frac{n+6}{2}\pdd.
\end{align*}
Now, since $\Gamma(z+1)=z\Gamma(z)$, we have that
$\frac{1}{n+4}\Gamma\pii\frac{n+6}{2}\pdd=\frac{1}{2}\Gamma\pii\frac{n+4}{2}\pdd$
and the proof is complete.
\end{proof}

\section{Existence of solutions for \textit{(P)}. Proof of Theorem~\ref{thm:theorem2existence}}\label{sec:meanvaluelaplacian}

Let $J(x,t)$ be a kernel defined in space time $\zR^{n+1}$ satisfying \textit{(J1)}, \textit{(J2)}, \textit{(J3)} and \textit{(J4)}. Let $f\in L^{\infty}(\mathbb{R}^{n})$ be given. Following the ideas in \cite{CERW08}, \cite{CoElRo09} and \cite{LibroRossi2010} we shall solve \textit{(P)} by iterated application of the Banach fixed point theorem. From \textit{(J3)} and \textit{(J4)}, $\alpha=\sup\{\beta:\iint_{s<\beta}J(x,s) dx ds<1\}$ is positive and finite. For the first step in the use of the fixed point theorem in the Banach space $\mathscr{B}_1=(\mathcal{C}\cap L^{\infty})(\mathbb{R}^n\times [0,\tfrac{\alpha}{2}])$ with the $L^{\infty}$ norm.

As in the statement of \textit{(P)} set
\begin{equation*}
\overline{v}(x,t)
= \left\{\begin{array}{ccc}
&f(x),\  & t<0;\\
&v(x,t),\  & t\in [0,\tfrac{\alpha}{2}],
\end{array}
\right.
\end{equation*}
where $v\in \mathscr{B}_1$. Since $\overline{v}$ is bounded on $\mathbb{R}^n\times (-\infty,\tfrac{\alpha}{2}]$ and $J\in L^1(\mathbb{R}^{n+1})$, the integral
\begin{equation*}
g(x,t):=\iint_{\mathbb{R}^n\times (-\infty,\tfrac{\alpha}{2}]}J(x-y,t-s)\overline{v}(y,s) dy ds
\end{equation*}
is absolutely convergent for $(x,t)\in \mathbb{R}^n\times [0,\tfrac{\alpha}{2}]$. Let us prove that, as a function of $(x,t)\in \mathbb{R}^n\times [0,\tfrac{\alpha}{2}]$ the function $g$ belongs to $\mathscr{B}_1$. In fact, from the definition of $g$ we see that
\begin{equation*}
\abs{g(x,t)}\leq \left(\iint J dy ds\right)\norm{\overline{v}}_{\infty}
\leq \sup \{\norm{f}_{\infty},\norm{v}_{\infty}\}.
\end{equation*}
Let us check the continuity of $g$. For $h\in \mathbb{R}^n$ and $k\in \mathbb{R}$ such that $(x+h,t+k)\in (-\infty,\tfrac{\alpha}{2}]$, we have that
\begin{align*}
\abs{g(x+h,t+k)-g(x,t)}&\leq \iint \abs{J(x+h-y,t+k-s)-J(x-y,t-s)}\abs{\overline{v}(y,s)} dy ds\\
&\leq \omega_1(\sqrt{\abs{h}^2+k^2})\norm{\overline{v}}_{\infty},
\end{align*}
where $\omega_1$ is the modulus of continuity in $L^1$ of $J$. Hence for $v\in \mathscr{B}_1$ we also have that $g\in \mathscr{B}_1$ when restricted to the strip $\mathbb{R}^n\times [0,\tfrac{\alpha}{2}]$.

Define $T_1: \mathscr{B}_1\to \mathscr{B}_1$ by $T_1v=g$.

Let us now prove that $T_1$ is a contractive mapping in $\mathscr{B}_1$. Let $v$ and $w$ be two functions in $\mathscr{B}_1$. Let $(x,t)\in \mathbb{R}^n\times [0,\tfrac{\alpha}{2}]$. Then, with
\begin{equation*}
\overline{w}(x,t)
= \left\{\begin{array}{ccc}
&f(x),\  & t<0;\\
&w(x,t),\  & t\in [0,\tfrac{\alpha}{2}],
\end{array}
\right.
\end{equation*}
we have that
\begin{align*}
T_1 v(x,t)-T_1 w(x,t)&= \iint_{s\leq\tfrac{\alpha}{2}} J(x-y,t-s)(\overline{v}(y,s)-\overline{w}(y,s)) dy ds\\
&=\iint_{0<s\leq\tfrac{\alpha}{2}} J(x-y,t-s)(v(y,s)-w(y,s)) dy ds.\\
\end{align*}
Hence
\begin{align*}
\norm{T_1 v-T_1 w}_{\infty}&\leq\left(\sup_{(x,t)\in \mathbb{R}^n\times [0,\tfrac{\alpha}{2}]} \iint_{0<s\leq\tfrac{\alpha}{2}} J(x-y,t-s) dy ds\right)\norm{v-w}_{\infty}
\end{align*}
Now, from the definition of $\alpha$, and \textit{(J1)}
\begin{align*}
\iint_{0<s\leq\tfrac{\alpha}{2}} J(x-y,t-s) dy ds &= \iint_{t-\tfrac{\alpha}{2}<\sigma\leq t} J(z,\sigma) dz d\sigma\\
&= \iint_{0<\sigma\leq t} J(z,\sigma) dz d\sigma\\
&\leq \iint_{0<\sigma\leq\tfrac{\alpha}{2}} J(z,\sigma) dz d\sigma=:\tau<1.
\end{align*}
So that $\norm{T_1 v - T_1 w}_{\infty}\leq \tau \norm{v-w}_{\infty}$.
Hence $T_1$ is a contractive mapping in $\mathscr{B}_1$. So that there exists a fixed point $u_1\in\mathscr{B}_1$ for $T_1$; $T_1u_1=u_1$. In other words
\begin{equation}
u_1(x,t)=\iint J(x-y,t-s) \overline{u_1}(y,s) dy ds
\end{equation}
for $x\in \mathbb{R}^n$ and $0\leq t\leq \tfrac{\alpha}{2}$.

Let us  check that
\begin{equation*}
\int_{\mathbb{R}^n} u_1(x,t) dx=\int_{\mathbb{R}^n} f(x) dx
\end{equation*}
for every $0\leq t\leq \tfrac{\alpha}{2}$, when $f\in L^1(\mathbb{R}^n)$. Since $u_1$ can be realized as the limit of the sequence of iterations of $T_1$ applied to any function $v\in \mathscr{B}_1$, we may take $v(x,t)=f(x)$ as the starting point. In doing so we see that the integral in variable $x$ of $\abs{T_1^{m}f(x,t)}$ is less and equal to $\int \abs{f} dx$. In fact, from \textit{(J3)}, we see
\begin{align*}
\int \abs{T_1f(x,t)} dx &= \int\abs{\iint J(x-y,t-s)f(y) dy ds} dx\\
&\leq \int \left(\iint J(x-y,t-s) dx ds\right) \abs{f(y)}dy\\
&= \int \abs{f} dy.
\end{align*}
Hence, inductively, assuming  $\int \abs{T_1^{m} f(x,t)}dx\leq \int \abs{f} dx$ we have
\begin{align*}
\int \abs{T_1^{m+1} f(x,t)}dx &= \int \abs{T_1(T_1^{m}f)(x,t)}dx\\
&= \int\abs{\iint J(x-y,t-s)\overline{T_1^{m}f}(y,s) dy ds} dx\\
&= \int\abs{\iint J(y,t-s)\overline{T_1^{m}f}(x-y,s) dy ds} dx\\
&= \int\left|\iint_{s<0} J(y,t-s)f(x-y) dy ds\right.  +\\
& \phantom{\iint_{s<0} ds} + \left.\iint_{s>0} J(y,t-s) T_1^{m}f(x-y,s) dy ds\right| dx\\
&\leq \iint_{s<0} J(y,t-s)\left|\int f(x-y)dx\right| dy ds +\\
& \phantom{\iint_{s<0}  ds} + \iint_{s>0} J(y,t-s) \left|\int T_1^{m}f(x-y,s) dx\right| dy ds\\
&\leq \int \abs{f} dx.
\end{align*}
With the same arguments we can conclude that $\int T^{m+1}_1 f(x,t) dx=\int f(x) dx$ for $0\leq t\leq\tfrac{\alpha}{2}$.
The result then follows since, for $f\in L^1\cap L^{\infty}$, we have that $T_1^m f$ tends to $u_1$ also in  $\mathcal{C}([0,\tfrac{\alpha}{2}],L^1(\mathbb{R}^n))$. In fact, if we prove that
\begin{equation}\label{eq:normoneCauchy}
|||T_1^{m+1}f- T_1^m f|||\leq \tau^m |||T_1^1 f-f|||
\end{equation}
where $|||v|||=\sup_{t\in [0,\tfrac{\alpha}{2}]}\norm{v(\cdot,t)}_{L^1(\mathbb{R}^n)}$, then $T_1^m f$ is also a Cauchy sequence in $\mathcal{C}([0,\tfrac{\alpha}{2}],L^1(\mathbb{R}^n))$. Since $T_1^m f$ converges uniformly to $u_1$ we get the desired preservation of the integral.
Let us prove \eqref{eq:normoneCauchy}.

Let us first check that $T_1^m f$ is continuous as a function of $t\in [0,\tfrac{\alpha}{2}]$ with values in $L^1(\mathbb{R}^n)$ for each $m$. Take $t$ and $t+h$ two points in $[0,\tfrac{\alpha}{2}]$. Then
\begin{align*}
&\int\abs{T_{1}^{m}f(x,t)-T_{1}^{m}f(x,t+h)}dx\\
&=\int\abs{\iint J(x-y,t-s)\overline{T_{1}^{m-1}f}(y,s)dyds\right.\\
&\phantom{=\int\iint}
\left.-\iint J(x-y,t+h-s)\overline{T_{1}^{m-1}f}(y,s)dyds}dx\\
&=\int\abs{\iint\langle  J(z,t-s)-J(z,t+h-s) \rangle  \overline{T_{1}^{m-1}f}(x-z,s) dz ds }dx\\
&\leq\int\int \abs{ J(z,t-s)-J(z,t+h-s) }\left(\int \abs{ \overline{T_{1}^{m-1}f}(x-z,s)}dx\right) dz ds\\
&\leq\int \abs{f(x)}dx\int\int \abs{ J(z,t-s)-J(z,t+h-s) }dz ds,
\end{align*}
which tends to zero when $h\to 0$ because $J\in L^1(\mathbb{R}^{n+1})$.

Similar calculations show that $T^m_1 f$ is a Cauchy sequence in the $|||\cdot|||$. In fact, for $t\in [0,\tfrac{\alpha}{2}]$ we have
\begin{align*}
&\int\abs{T_{1}^{m+1}f(x,t)-T_{1}^{m}f(x,t)}dx\\
&=\int\abs{\iint J(x-y,t-s) \left(  \overline{T_{1}^{m}f}(y,s)-\overline{T^{m-1}_1f}(y,s) \right)  dyds}dx\\
&=\int\abs{\int\int_{0\leq s \leq t} J(x-y,t-s) \left( T_{1}^{m}f(y,s)-T^{m-1}_1f(y,s)  \right) dyds }dx\\
&\leq\int\int_{0\leq s \leq t} J(z,t-s) \left(\int \abs{T_{1}^{m}f(x-z,s)-T^{m-1}_1f(x-z,s)} dx \right) dzds\\
&=\int\int_{0\leq s \leq t} J(z,t-s) \left(\int \abs{T_{1}^{m}f(x,s)-T^{m-1}_1f(x,s)} dx \right) dzds\\
&\leq\left(\sup\limits_{s\in [0,\tfrac{\alpha}{2}]}\int \abs{T_{1}^{m}f(x,s)-T^{m-1}_1f(x,s)} dx\right) \iint_{0\leq s \leq\tfrac{\alpha}{2}} J(z,t-s)dzds\\
&=\tau ||| T_{1}^{m}f-T^{m-1}_1f |||,
\end{align*}
hence
\begin{equation*}
|||T_{1}^{m+1}f-T^{m}_1f |||\leq \tau ||| T_{1}^{m}f-T^{m-1}_1 f|||.
\end{equation*}
By iteration we obtain \eqref{eq:normoneCauchy}.

Let us observe that since $u_1(x,t)$ can be obtained as the iteration of $T_1$ starting at any function $v$ in $\mathscr{B}_1$, we can in particular take $v$ as the constant function
$\frac{s(f)-i(f)}{2}$, where $s(f)=\sup f$ and $i(f)=\inf f$. Then  $\overline{v}=v\mathcal{X}_{\{0\leq t\leq \tfrac{\alpha}{2}\}} + f\mathcal{X}_{\{t<0\}}$, so that $i(f)\leq \overline{v}\leq s(f)$ everywhere. From \textit{(J2)} and \textit{(J3)} we also have $i(f)\leq T_1 v\leq s(f)$ on $\mathbb{R}^n\times [0,\tfrac{\alpha}{2}]$. The same argument shows that for every iteration $T_1^{k}v$ of $T_1 v$ we have $i(f)\leq T_1^{k}v\leq s(f)$. Since $u_1$ is the uniform limit of $T_1^{k}v$ we get
\begin{equation*}
i(f)\leq u_1(x,t)\leq s(f)
\end{equation*}
on the strip $\mathbb{R}^n \times [0,\tfrac{\alpha}{2}]$.
So far we have existence and mass preservation for $t\in [0,\tfrac{\alpha}{2}]$.

Now proceed inductively by covering $\mathbb{R}^+$ with intervals of the type $[(i-1)\tfrac{\alpha}{2}, i\tfrac{\alpha}{2}]$. The first step, $i=1$ is precisely the one described above. Assume that $u_i\in \mathscr{B}_i=(\mathcal{C}\cap L^{\infty})(\mathbb{R}^n\times [(i-1)\tfrac{\alpha}{2}, i\tfrac{\alpha}{2}])$ for $i= 1,\ldots, j$ have been built in such a way that
\begin{equation*}
u_i(x,t)=\iint J(x-y,t-s) \overline{u_i}(y,s) dy ds,
\end{equation*}
with
\begin{equation*}
\overline{u_i}(x,t)
= \left\{\begin{array}{ccc}
&\overline{u_{i-1}}(x,t),\  & t<(i-1)\tfrac{\alpha}{2};\\
&u_i(x,t),\  & (i-1)\tfrac{\alpha}{2}\leq t\leq i\tfrac{\alpha}{2}.
\end{array}
\right.
\end{equation*}
Moreover, $\int_{\mathbb{R}^n} u_i(x,t) dx=\int_{\mathbb{R}^n}f(x) dx$ for $(i-1)\tfrac{\alpha}{2}\leq t\leq i\tfrac{\alpha}{2}$,
\begin{equation}\label{eq:infsupiteration}
i(f)\leq u_i(x,t)\leq s(f)
\end{equation}
for every $(x,t)\in \mathbb{R}^n\times [(i-1)\tfrac{\alpha}{2}, i\tfrac{\alpha}{2}]$, and $u_i(x,(i-1)\tfrac{\alpha}{2})=u_{i-1}(x,(i-1)\tfrac{\alpha}{2})$ for every $x$.

Define $\mathscr{B}_{j+1}$ as the space $(\mathcal{C}\cap L^{\infty})(\mathbb{R}^n\times [j\tfrac{\alpha}{2}, (j+1)\tfrac{\alpha}{2}])$ with the complete metric induced by the $L^{\infty}$ norm. For $v\in \mathscr{B}_{j+1}$, define
\begin{equation*}
T_{j+1}v(x,t)=\iint J(x-y,t-s)\overline{v}(y,s) dy ds
\end{equation*}
with
\begin{equation*}
\overline{v}(x,t)
= \left\{\begin{array}{ccc}
&\overline{u_{j}}(x,t),\  & t<j\tfrac{\alpha}{2};\\
&v(x,t),\  & j\tfrac{\alpha}{2}\leq t\leq (j+1)\tfrac{\alpha}{2}.
\end{array}
\right.
\end{equation*}
As in the case of $i=1$, it easy to check that with $(x,t)\in \mathbb{R}^n\times [j\tfrac{\alpha}{2}, (j+1)\tfrac{\alpha}{2}]$, $T_{j+1}v\in \mathscr{B}_{j+1}$. Hence $T_{j+1}: \mathscr{B}_{j+1} \to \mathscr{B}_{j+1}$. It is also easy to prove that $T_{j+1}$ is contractive on $\mathscr{B}_{j+1}$ with the same rate of contraction $\tau$ obtained when $i=1$.

Also, with the same argument as in the case $i=1$, with $\int \overline{u_{j}}(x,t)dx=\int f(x)dx$ when $t\leq j\tfrac{\alpha}{2}$ we have that for $t\in [j\tfrac{\alpha}{2}, (j+1)\tfrac{\alpha}{2}]$
\begin{equation*}
\int_{\mathbb{R}^n} u_{j+1}(x,t) dx
= \int_{\mathbb{R}} f(x) dx.
\end{equation*}

In order to check that $u_{j+1}(x,j\tfrac{\alpha}{2})=u_j(x,j\tfrac{\alpha}{2})$ we have to observe that for $j\tfrac{\alpha}{2}\leq t\leq (j+1)\tfrac{\alpha}{2}$, the fixed point equation is given by
\begin{equation*}
u_{j+1}(x,t)=\iint J(x-y,t-s) \overline{u_{j+1}}(y,s) dy ds.
\end{equation*}
For $t=j\tfrac{\alpha}{2}$, property \textit{(J1)} shows that the above integral only involves values of $s$ which are bounded above by $j\tfrac{\alpha}{2}$. Then, for those values of $s$, $\overline{u_{j+1}}(y,s)=\overline{u_{j}}(y,s)$. So that
\begin{equation*}
u_{j+1}(x,j\tfrac{\alpha}{2})=\iint J(x-y,j\tfrac{\alpha}{2}-s)\overline{u_j}(y,s) dy ds=u_j(x,j\tfrac{\alpha}{2}),
\end{equation*}
as desired.

Property \eqref{eq:infsupiteration} for $i=j+1$ can be proved following the same argument used in the case $i=1$. Let us notice that the function $u(x,t)$ defined in $\mathbb{R}^{n+1}_{+}$ by $u(x,t)=u_{j(t)}(x,t)$ with $j(t)$ the only positive integer for which $(j(t)-1)\tfrac{\alpha}{2}\leq t < j(t)\tfrac{\alpha}{2}$ is continuous and bounded. Moreover, $i(f)\leq u(x,t)\leq s(f)$ for every $(x,t)\in \mathbb{R}^{n+1}_{+}$.

The above remarks prove that $u\in \mathscr{B}=(\mathcal{C}\cap L^{\infty})(\mathbb{R}^{n+1}_{+})$ and solves \textit{(P)}.

In order to prove the uniqueness of the solution $u$ let us argue as follows. Assume that $u$ and $\widetilde{u}$ are two solutions. Then their restrictions on the strip $\mathbb{R}^{n}\times [0,\tfrac{\alpha}{2}]$ coincide. Since the fixed point of $T_{1}$ is unique and being a solution of $(P)$ in $\mathbb{R}^{n}\times [0,\tfrac{\alpha}{2}]$ is equivalent to be a fixed point for $T_{1}$, we see that $u\equiv\widetilde{u}$ on $\mathbb{R}^{n}\times [0,\tfrac{\alpha}{2}]$. For the next time interval $[\tfrac{\alpha}{2},\alpha]$ the restriction of both, $u$ and $\widetilde{u}$ to this interval are fixed points of the \textit{same} operator $T_{2}$. Again the uniqueness for the Banach fixed point guarantees $u\equiv \widetilde{u}$ on $\mathbb{R}^{n}\times [\tfrac{\alpha}{2},\alpha]$. Proceeding inductively we get that $u\equiv \widetilde{u}$ everywhere.

\medskip
Let us finally prove the estimate \eqref{eq:temperaturelipschitz}. First we shall show that \eqref{eq:temperaturelipschitz} holds true when $(x,t)\in\mathbb{ R}^n\times [0,\tfrac{\alpha}{2}]$. This can be accomplished because the function $u$ in the first time interval $[0,\tfrac{\alpha}{2}]$ coincides with $u_1$ which is provided by the Banach fixed point theorem and the rate of convergence can be estimated by the contraction constant $\tau$. We already know that $\tau=\iint_{s\leq \tfrac{\alpha}{2}}J(y,s) dy ds<1$. Set $u_1^m$ to denote the $m$-th iteration of $T_1$ applied to the initial guess $u_1^0=f$, then since $\norm{u_1^{m+1}-u_1^m}_{\infty}\leq \tau^m\norm{u_1^1-u_1^0}_{\infty}$, we see that
\begin{equation*}
\norm{u_1^m-f}_{\infty}\leq\left(\sum_{j=0}^m \tau^j\right)\norm{u_1^1-f}_{\infty}\leq\frac{1}{1-\tau}\norm{u_1^1-f}_{\infty}
\end{equation*}
for every $m=1,2,\ldots$

Let us now show that for $(x,t)\in \mathbb{R}^n\times [0,\tfrac{\alpha}{2}]$ there exists a constant $\widetilde{C}$ depending only on $J$ such that $\norm{u_1^1-f}_{\infty}\leq\widetilde{C}[f]_{\gamma}$. In fact,
\begin{align*}
\abs{u_1^1(x,t)-f(x)} &= \abs{(T_1f)(x,t)-f(x)}\\
&= \abs{\iint J(x-y,t-s)(f(y)-f(x))dy ds}\\
&\leq [f]_{\gamma}\left(\iint J(x-y,t-s)\abs{x-y}^{\gamma} dy ds\right).
\end{align*}
Hence for every $m=1,2,\ldots$ we have
\begin{equation*}
\norm{u_1^m - f}_{L^{\infty}\left(\mathbb{R}^n\times [0,\tfrac{\alpha}{2}]\right)}\leq C[f]_{\gamma},
\end{equation*}
where $C$ depends only on $J$. The same is true for the uniform limit $u_1$ of the sequence $u_1^m$, in other words
\begin{equation}\label{eq:firstestimationlipschitz}
\norm{u_1 - f}_{L^{\infty}\left(\mathbb{R}^n\times [0,\tfrac{\alpha}{2}]\right)}\leq C[f]_{\gamma}.
\end{equation}
Let us now check how to get the same type of estimate for the time interval $[\tfrac{\alpha}{2},\alpha]$. From the
construction of $u$ we have that on $\mathbb{R}^n\times[\tfrac{\alpha}{2},\alpha]$, $u=u_2$ with
\begin{equation*}
u_2(x,t)=\iint J(x-y,t-s) \overline{u_2}(y,s) dy ds
\end{equation*}
\begin{displaymath}
\overline{u_2}(x,t) = \left\{ \begin{array}{cc}
f(x), & t<0;\\
u_{1}(x,t), & t\in [0,\tfrac{\alpha}{2}];\\
u_{2}(x,t), & t\in [\tfrac{\alpha}{2},\alpha].
\end{array} \right.
\end{displaymath}
On $\mathbb{R}^n\times[\tfrac{\alpha}{2},\alpha]$ the solution $u_2$ is the only fixed point for the operator $T_2$ and, since the
limit $u_2$ of iterations $u_2^m$ of $T_2 u_2^0 = u_2^1$ is independent of the starting point $u_2^0$, let us take again $u_2^0 = f$.
Hence $\norm{u_2-f}_{\infty}\leq\frac{1}{1-\tau}\norm{u_2^1-f}_{\infty}$. Notice that if we write
$\overline{f}(y,s)= f(y)\mathcal{X}_{s<0}(s)+u_1(y,s)\mathcal{X}_{[0,\tfrac{\alpha}{2}]}(s)+f(y)\mathcal{X}_{[\tfrac{\alpha}{2},\alpha]}(s)$
\begin{equation*}
u_2^1 (x,t)=\iint J(x-y,t-s) \overline{f}(y,s) dy ds.
\end{equation*}
Let us finally check that the desired estimate holds for $\norm{u_2^1 - f}_{\infty}$ in $\mathbb{R}^n\times[\tfrac{\alpha}{2},\alpha]$.
Take $(x,t)\in\mathbb{R}^n\times[\tfrac{\alpha}{2},\alpha]$, then
\begin{align*}
\abs{u_2^1(x,t)-f(x)} &= \abs{(T_2f)(x,t)-f(x)}\\
&= \abs{\iint J(x-y,t-s) \overline{f}(y,s) dy ds - f(x)}\\
&\leq \iint_{s\leq 0} J(x-y,t-s) \abs{f(y) - f(x)}dy ds\\
&\phantom{\leq \iint_{s\leq 0}} + \iint_{0<s<\tfrac{\alpha}{2}} J(x-y,t-s) \abs{u_1(y,s)-f(y)}dy ds\\
&\phantom{\leq \iint_{s\leq 0J(x-y,t-s)}} +  \iint_{s\leq\alpha} J(x-y,t-s) \abs{f(y) - f(x)}dy ds.
\end{align*}
The first and the third terms in the right hand side of the above inequality are bounded by the product
of the Lip$\gamma$ seminorm of $f$ and a constant depending only on $J$. For the second term we use \eqref{eq:firstestimationlipschitz} and
we are done.

\section{Maximum Principle: Proof of Theorem~\ref{thm:maximumprinciple}}\label{sec:maximumprinciple}

Recall that $\alpha=\sup \{\beta:\iint_{s\leq \beta}J(y,s)dy ds<1\}$. Since the function $I(\beta)=\iint_{s\leq \beta} J dy ds$ is increasing and continuous as a function of $\beta$, $\alpha$ is also the infimum of those values of $\beta$ for which $I(\beta)=1$. Moreover, from definition of $\alpha$, $0<I(\tfrac{\alpha}{2})<1$.

Let $t_k=\alpha+(k-1)\tfrac{\alpha}{2}$, $B_k=\mathbb{R}^n\times [0,t_k]$, $S_k=\sup_{B_k}\abs{w}$ for $k=1,2,\ldots$. Let us see that $S_k=S_{k-1}$. Let $(x,t)\in \mathbb{R}^n\times [t_{k-1},t_k]$, hence

\begin{center}
  \begin{tikzpicture}[scale=1]
    \begin{scope}
        \shade[bottom color=black!40, top color=black!50]
            (0,1) rectangle (5,0);
    \end{scope}
    \draw[important line]  (0,1) node[left] {$\alpha$} -- (5,1);
    \begin{scope}
        \shade[bottom color=black!10, top color=black!10]
            (0,3) rectangle (5,2.5);
    \end{scope}
      \begin{scope}
        \shade[bottom color=black!20, top color=black!20]
            (0,2.5) rectangle (5,2);
    \end{scope}
    \coordinate (y) at (0,4);
    \coordinate (x) at (5,0);
    \draw[-] (y) node[above] {$s$} -- (0,0) --  (x) node[below]
    {$\mathit{\mathbb{R}^n}$};
    \draw[important line]  (0,3-0.5-0.5) node[left] {$t_{k-2}$} -- (5,3-0.5-0.5);
    \draw[important line]  (0,3-0.5) node[left] {$t_{k-1}$} -- (5,3-0.5);
    \draw[important line]  (0,3) node[left] {$t_k$} -- (5,3);
    \fill[black] (2.5,2.9) circle (1pt) node[above] {$(x,t)$};
    \fill [opacity=0.5,blue]
    (2.5,2.9) -- (2.65,2.83) -- (2.7,2.7) -- (2.9,2.45) -- (2.8,2.3) -- (2.7,2.15) -- (2.65,2.1) -- (2.6,2) -- (2.5,1.9) -- (2.45,2) -- (2.45,2.15) -- (2.4,2.2) -- (2.3,2.21) -- (2.2,2.37) -- (2.18,2.45) -- (2.24,2.56) -- (2.3,2.7) -- (2.4,2.85) -- (2.45,2.87) -- cycle;
  \end{tikzpicture}
\end{center}

\begin{align*}
&\abs{w(x,t)} = \abs{\iint J(x-y,t-s) w(y,s) dy ds}\\
&= \abs{\,\,\iint\limits_{t_{k-1}\leq s\leq t} J(x-y,t-s) w(y,s) dy ds + \iint\limits_{t-\alpha\leq s\leq t_{k-1}} J(x-y,t-s) w(y,s) dy ds}\\
&\leq S_k\iint\limits_{t_{k-1}\leq s\leq t} J(x-y,t-s) dy ds + S_{k-1}\iint\limits_{t-\alpha\leq s\leq t_{k-1}} J(x-y,t-s) dy ds\\
&= S_k\left(1-\!\!\!\!\!\!\iint\limits_{t-\alpha\leq s\leq t_{k-1}}\!\!\!\!\!\!J(x-y,t-s) dy ds\right) + S_{k-1}\left(\,\,\iint\limits_{t-\alpha\leq s\leq t_{k-1}}\!\!\!\!\!\! J(x-y,t-s) dy ds\right)\\
&= S_k - (S_k - S_{k-1})\left(\iint_{t-\alpha\leq s\leq t_{k-1}} J(x-y,t-s) dy ds\right).
\end{align*}
Hence
\begin{equation*}
\left(\iint_{t-\alpha\leq s\leq t_{k-1}} J(x-y,t-s) dy ds\right)\left( S_k-S_{k-1}\right) \leq S_k-\abs{w(x,t)},
\end{equation*}
since
\begin{align*}
\iint_{t-\alpha\leq s\leq t_{k-1}} J(x-y,t-s) dy ds &= \iint_{t-t_{k-1}\leq s_1\leq\alpha} J(x-y,s_1) dy ds_1\\
& \geq \iint_{\tfrac{\alpha}{2}\leq s_1\leq\alpha} J(x-y,s_1) dy ds_1
=1- I(\tfrac{\alpha}{2}).
\end{align*}
Then
\begin{equation}\label{eq:supremumbandas}
(1- I(\tfrac{\alpha}{2}))(S_k-S_{k-1})\leq S_k-\sup_{B_k\setminus B_{k-1}}\abs{w}
\end{equation}
Of course $S_k\geq S_{k-1}$, if $S_k>S_{k-1}$, then $S_k= \sup_{B_k\setminus B_{k-1}}\abs{w}$ and the right hand of \eqref{eq:supremumbandas} equals  zero. Since $1- I(\tfrac{\alpha}{2})$ is positive we have that $S_k\leq S_{k-1}$, a contradiction.

\section{Convergence of solutions. Proof of  Theorem~\ref{thm:theorem4convergence}}\label{sec:proofconvergence}

Along this section we shall assume that $J$ in problem \textit{(P)} is a fixed parabolic rescaling of a mean value kernel $H\in\mathscr{H}$. More precisely for $r>0$ we consider the CTRW, with past density distribution $f(x)$, generated by the space time probability density $H_r(x,t)=\tfrac{1}{r^{n+2}}H(\tfrac{x}{r},\tfrac{t}{r^2})$, with $H(x,t)$ satisfying \textit{(J1)} to \textit{(J5)} and the mean value formula. The space probability density function for the localization of the moving particle at time $t$ is the solution $u(H_r,f)(\cdot,t)$ of $P(H_r,f)$.

The result contained in Theorem~\ref{thm:theorem4convergence} will be a consequence of the two following lemmas and the maximum principle contained in Theorem~\ref{thm:maximumprinciple}.
\begin{lemma}\label{lem:infinitynormsolutionPrandf}
Let $J$ be a kernel satisfying \textit{(J1)}, \textit{(J2)}, \textit{(J3)} and \textit{(J4)}. Set $\alpha =\sup \{\beta:\iint_{s\leq \beta}J(y,s) dyds <1\}$. Let $f\in (\mathcal{C}^{0,\gamma}\cap L^{\infty})(\mathbb{R}^{n})$, $0<\gamma\leq 1$. Then there exists a constant $C>0$ such that for every $r>0$
\begin{equation*}
\norm{u(J_r,f)-f}_{L^{\infty}(\mathbb{R}^n\times [0,\alpha r^2])}\leq C [f]_{\gamma} r^{\gamma}.
\end{equation*}
\end{lemma}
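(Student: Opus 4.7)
The plan is to adapt the proof of estimate \eqref{eq:temperaturelipschitz} in Theorem~\ref{thm:theorem2existence}, applied to the rescaled kernel $J_r$ in place of $J$, while carefully tracking the dependence of every constant on the scale parameter $r$. Two scaling observations do most of the bookkeeping. The parabolic change of variables $y=rz$, $s=r^{2}\sigma$ gives
\begin{equation*}
\iint_{s\leq\beta}J_{r}(y,s)\,dy\,ds=\iint_{\sigma\leq\beta/r^{2}}J(z,\sigma)\,dz\,d\sigma,
\end{equation*}
so $\alpha_{J_{r}}=\alpha\,r^{2}$, and the same substitution shows that the contraction rate $\tau=\iint_{0<\sigma\leq\alpha/2}J(z,\sigma)\,dz\,d\sigma$ produced in the fixed point argument of Theorem~\ref{thm:theorem2existence} is invariant when $J$ is replaced by $J_{r}$.

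For the first strip $\mathbb{R}^{n}\times[0,\alpha r^{2}/2]$, I follow the first half of the proof of \eqref{eq:temperaturelipschitz} with $T_{1,r}v(x,t)=\iint J_{r}(x-y,t-s)\overline{v}(y,s)\,dy\,ds$ and initial guess $u_{1,r}^{0}=f$. The one-step estimate becomes
\begin{equation*}
\bigl|T_{1,r}f(x,t)-f(x)\bigr|\leq[f]_{\gamma}\iint J_{r}(x-y,t-s)|x-y|^{\gamma}\,dy\,ds=M_{\gamma}\,r^{\gamma}[f]_{\gamma},
\end{equation*}
where $M_{\gamma}=\iint J(z,\sigma)|z|^{\gamma}\,dz\,d\sigma$ is finite by \textit{(J4)}. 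Since $\tau$ does not depend on $r$, summing the geometric series of iterates yields
\begin{equation*}
\|u(J_{r},f)-f\|_{L^{\infty}(\mathbb{R}^{n}\times[0,\alpha r^{2}/2])}\leq\frac{M_{\gamma}}{1-\tau}\,r^{\gamma}[f]_{\gamma}.
\end{equation*}

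For the second strip $\mathbb{R}^{n}\times[\alpha r^{2}/2,\alpha r^{2}]$ I mimic the second half of the proof of \eqref{eq:temperaturelipschitz}: on this strip $u(J_{r},f)$ is the unique fixed point of the analogous operator $T_{2,r}$, which I again iterate starting from $f$, using the extended datum $\overline{f}(y,s)=f(y)\mathcal{X}_{s<0}+u(J_{r},f)(y,s)\mathcal{X}_{[0,\alpha r^{2}/2]}(s)+f(y)\mathcal{X}_{[\alpha r^{2}/2,\alpha r^{2}]}(s)$. Splitting $T_{2,r}f(x,t)-f(x)$ into three pieces according to the sign and size of $s$, the two integrals involving $f(y)-f(x)$ are again bounded by $M_{\gamma}\,r^{\gamma}[f]_{\gamma}$ after the parabolic substitution, while the remaining piece involving $u(J_{r},f)-f$ on the first strip is dominated by $\frac{M_{\gamma}}{1-\tau}\,r^{\gamma}[f]_{\gamma}\cdot\iint J_{r}\leq\frac{M_{\gamma}}{1-\tau}\,r^{\gamma}[f]_{\gamma}$. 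Applying the $\frac{1}{1-\tau}$ amplification of the geometric series on this second strip produces the same type of bound, and concatenating the two strips gives the lemma with $C=C(J,\gamma)$ independent of $r$ and $f$.

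I do not anticipate a real obstacle: the argument is essentially a scale-tracking reprise of the proof of \eqref{eq:temperaturelipschitz}. The only small point to watch is that the relevant moment $M_{\gamma}$ is finite (which is automatic from \textit{(J4)}) and that the contraction constant $\tau$ and the total mass of $J_{r}$ are genuinely $r$-independent, so the factor $r^{\gamma}$ comes out clean from the $|x-y|^{\gamma}$ weight and nothing else.
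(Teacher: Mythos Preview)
Your argument is correct, but it takes a different route from the paper's proof. You redo the Banach fixed-point construction of Theorem~\ref{thm:theorem2existence} directly for $J_r$, tracking how each ingredient scales: $\alpha_{J_r}=\alpha r^{2}$, the contraction constant $\tau$ is scale-invariant, and the moment $\iint J_r(z,\sigma)\abs{z}^{\gamma}\,dz\,d\sigma=r^{\gamma}M_{\gamma}$; then you run the two-strip argument on $[0,\alpha r^{2}/2]$ and $[\alpha r^{2}/2,\alpha r^{2}]$ exactly as in the proof of \eqref{eq:temperaturelipschitz}. The paper, by contrast, never reopens the fixed-point machinery. It invokes the scaling identity
\[
u(J_r,f)=\bigl[u\bigl(J,f_{1/r}\bigr)\bigr]_r,
\]
i.e.\ $u(J_r,f)(x,t)=u\bigl(J,f(r\,\cdot)\bigr)\bigl(\tfrac{x}{r},\tfrac{t}{r^{2}}\bigr)$, and then applies the already-proved estimate \eqref{eq:temperaturelipschitz} once, with $f(r\,\cdot)$ in place of $f$ and $[f(r\,\cdot)]_{\gamma}=r^{\gamma}[f]_{\gamma}$. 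The paper's route is shorter and isolates all the $r$-dependence in a single change of scale of the initial datum; your route avoids having to verify (or even state) the scaling identity for solutions of \textit{(P)}, at the price of re-running the two-strip fixed-point estimate. Both yield the same constant $C=C(J,\gamma)$ independent of $r$ and $f$.
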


The next lemma contains a well known result on the rate of convergence of temperatures to the initial condition when it belongs to a Lipschitz class.

\begin{lemma}\label{lem:convergetemperatureLipschitz}
Let $f\in (\mathcal{C}^{0,\gamma}\cap L^{\infty})(\mathbb{R}^{n})$ and $u(x,t)=(4\pi t)^{-\tfrac{n}{2}}\int_{\mathbb{R}^n}e^{-\tfrac{\abs{x-y}^2}{4t}}f(y) dy$ the solution of
\begin{equation*}
\left\{\begin{array}{cc}
\frac{\partial u}{\partial t}=\triangle u,\  & \mathbb{R}^{n+1}_{+};\\
u(x,0)=f(x),\  & x\in \mathbb{R}^n.
\end{array}
\right.
\end{equation*}
Then there exists a constant $C>0$ such that
\begin{equation*}
\abs{u(x,t)-f(x)}\leq C [f]_{\gamma} t^{\tfrac{\gamma}{2}}
\end{equation*}
for every $(x,t)\in \mathbb{R}^{n+1}_{+}$.
\end{lemma}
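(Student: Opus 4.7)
The plan is a direct computation exploiting the scaling invariance of the Weierstrass kernel. Write $W(x,t) = (4\pi t)^{-n/2} e^{-|x|^2/(4t)}$, so that $u(x,t) = \int_{\mathbb{R}^n} W(x-y,t) f(y)\,dy$. Since $\int_{\mathbb{R}^n} W(z,t)\,dz = 1$ for every $t>0$, I would first rewrite
\begin{equation*}
u(x,t) - f(x) = \int_{\mathbb{R}^n} W(x-y,t)\bigl(f(y) - f(x)\bigr)\,dy = \int_{\mathbb{R}^n} W(z,t)\bigl(f(x-z) - f(x)\bigr)\,dz,
\end{equation*}
after the change of variables $z = x - y$.

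Next I would apply the Hölder estimate $|f(x-z) - f(x)| \leq [f]_\gamma |z|^\gamma$ to get
\begin{equation*}
\abs{u(x,t) - f(x)} \leq [f]_\gamma \int_{\mathbb{R}^n} W(z,t) |z|^\gamma\,dz.
\end{equation*}
The remaining integral is then handled by the parabolic rescaling $z = \sqrt{t}\,w$, which gives $dz = t^{n/2}\,dw$ and $W(z,t)\,dz = (4\pi)^{-n/2}e^{-|w|^2/4}\,dw$, hence
\begin{equation*}
\int_{\mathbb{R}^n} W(z,t)|z|^\gamma\,dz = t^{\gamma/2}\int_{\mathbb{R}^n}(4\pi)^{-n/2}e^{-|w|^2/4}|w|^\gamma\,dw =: C\,t^{\gamma/2},
\end{equation*}
where $C$ is a finite constant depending only on $n$ and $\gamma$ (the integral converges because of the Gaussian decay). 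Combining these two displays yields $|u(x,t)-f(x)| \leq C [f]_\gamma t^{\gamma/2}$, uniformly in $x \in \mathbb{R}^n$ and $t>0$.

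There is essentially no obstacle here; the argument is self-contained and avoids invoking the PDE, resting only on the fact that the Weierstrass kernel is a probability density scaling parabolically. The only mildly delicate point is to notice that the bound is uniform in $t > 0$ (not just for small $t$), which is automatic from the computation above since the integral $\int (4\pi)^{-n/2} e^{-|w|^2/4}|w|^\gamma\,dw$ is independent of $t$.
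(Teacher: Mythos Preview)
Your proof is correct and follows essentially the same route as the paper: both use that the Weierstrass kernel integrates to one to write $u(x,t)-f(x)$ as an integral against $f(y)-f(x)$, apply the H\"older estimate, and then perform a parabolic rescaling to extract the factor $t^{\gamma/2}$. The only difference is cosmetic (the paper scales by $2\sqrt{t}$ rather than $\sqrt{t}$, leading to a slightly different form of the constant).
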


\begin{proof}[ Proof of Theorem~\ref{thm:theorem4convergence}]
From Lemma~\ref{lem:infinitynormsolutionPrandf} and Lemma~\ref{lem:convergetemperatureLipschitz} we have that
\begin{equation*}
\sup_{(x,t)\in \mathbb{R}^n\times [0,\alpha r^2]}\abs{u(H_r,f)(x,t)-u(x,t)}\leq C [f]_{\gamma} r^{\gamma}.
\end{equation*}
Now, since $\alpha=\sup \{\beta:\iint_{s\leq \beta}H(y,s)dy ds<1\}$ we also have that $\alpha r^2=\sup \{\beta:\iint_{s\leq \beta}H_r(y,s)dy ds<1\}$. Hence for $(x,t)\in \mathbb{R}^n\times (\alpha r^2,+\infty)$ we have that the support of $H_r(x-y,t-s)$ as a function of $(y,s)$ is contained in $\mathbb{R}^{n+1}_{+}$. So that for a temperature $u$ defined on $\mathbb{R}^{n+1}_{+}$ and $t>\alpha r^2$, since $H\in\mathscr{H}$, the mean value formula holds and
\begin{equation*}
u(x,t)=\iint H_r(x-y,t-s) u(y,s) dy ds
\end{equation*}
for $x\in \mathbb{R}^n$ and $t>\alpha r^2$.

On the other hand, we also have that $u(H_r,f)=H_r\ast u(H_r,f)$ for $x\in \mathbb{R}^n$ and $t>\alpha r^2$, because $u(H_r,f)$ solves $P(H_r,f)$. Hence, applying Theorem~\ref{thm:maximumprinciple} with $H_r$ instead of $J$, $\alpha r^2$ instead of $\alpha$, $u(H_r,f) - u$ instead of $w$, we get
\begin{equation*}
\sup_{\mathbb{R}^{n+1}_{+}}\abs{u(H_r,f)(x,t)-u(x,t)}\leq  \sup_{\mathbb{R}^n\times [0,\alpha r^2]}\abs{u(H_r,f)(x,t)-u(x,t)}\leq C [f]_{\gamma} r^{\gamma},
\end{equation*}
as desired.
\end{proof}

\begin{proof}[Proof of Lemma~\ref{lem:infinitynormsolutionPrandf}]
The result follows from \eqref{eq:temperaturelipschitz} in Theorem~\ref{thm:theorem2existence} after parabolic rescaling. In fact, set $u(J,g)$ to denote the solution in $\mathbb{R}^{n+1}_{+}$ of $P(J,g)$. With this notation we have that
\begin{equation*}
u(J_r,f)=\left[u\left(J,f_{\tfrac{1}{r}}\right)\right]_r,
\end{equation*}
for each $r>0$. Hence, for $x\in \mathbb{R}^n$ and $0\leq\tfrac{t}{r^2}\leq\alpha$, from \eqref{eq:temperaturelipschitz} with $f(r\cdot)$ instead $f$, we have
\begin{align*}
\abs{u(J_r,f)(x,t)-f(x)}&=\abs{\left[u\left(J,f_{\tfrac{1}{r}}\right)\right]_r(x,t)-f(x)}\\
&=\abs{u(J,f(r\cdot))\left(\tfrac{x}{r},\tfrac{t}{r^2}\right)-f\left(r\left(\tfrac{x}{r}\right)\right)}\\
&\leq C [f(r\cdot)]_{\gamma}=Cr^{\gamma}[f]_{\gamma}.
\end{align*}
\end{proof}

\begin{proof}[Proof of Lemma~\ref{lem:convergetemperatureLipschitz}]
For $(x,t)\in \mathbb{R}^{n+1}_{+}$, since the Weierstrass kernel has integral equal to one, we have
\begin{align*}
\abs{u(x,t)-f(x)}&=\abs{\frac{1}{(4\pi t)^{n/2}}\int_{\mathbb{R}^{n}}e^{-\frac{|x-y|^{2}}{4t}}f(y) dy-f(x)}\\
&\leq\frac{1}{(4\pi t)^{n/2}}\int_{\mathbb{R}^{n}}e^{-\frac{|x-y|^{2}}{4t}}\abs{f(y)-f(x)} dy\\
&\leq\frac{[f]_{\gamma}}{(4\pi t)^{n/2}}\int_{\mathbb{R}^{n}}e^{-\frac{|x-y|^{2}}{4t}}\abs{y-x}^{\gamma} dy\\
&=\frac{[f]_{\gamma}}{\pi^{n/2}}\int_{\mathbb{R}^{n}}e^{-\abs{z}}\abs{z}^{\gamma}dz\, t^{\frac{\gamma}{2}}.
\end{align*}

\end{proof}


\def\cprime{$'$}
\providecommand{\bysame}{\leavevmode\hbox to3em{\hrulefill}\thinspace}
\providecommand{\MR}{\relax\ifhmode\unskip\space\fi MR }
\providecommand{\MRhref}[2]{%
  \href{http://www.ams.org/mathscinet-getitem?mr=#1}{#2}
}
\providecommand{\href}[2]{#2}


\bigskip
\noindent{\footnotesize
\textsc{Instituto de Matem\'atica Aplicada del Litoral, CONICET, UNL.}

%
%
\smallskip
\noindent\textmd{CCT CONICET Santa Fe, Predio ``Alberto Cassano'', Colectora Ruta Nac.~168 km 0, Paraje El Pozo, 3000 Santa Fe, Argentina.}
}
%

\end{document}